\documentclass[a4paper,12pt,reqno]{amsart}% ------------------------------------------------------------------------------
\newcommand{\cal}{\mathcal}
\usepackage{graphicx}
\usepackage{amsmath}
\usepackage{amssymb}
\usepackage{amsthm}
\usepackage{enumitem}
\usepackage{bbm}
\usepackage[l2tabu,orthodox]{nag}
\usepackage[all,error]{onlyamsmath}
\usepackage[strict]{csquotes}
\usepackage{url}
\newtheorem{theorem}{Theorem}
\newtheorem{lemma}[theorem]{Lemma}
\newtheorem{corollary}[theorem]{Corollary}
\newtheorem{prop}[theorem]{Proposition}
\theoremstyle{definition}
\newtheorem{definition}[theorem]{Definition}
\newtheorem{remark}[theorem]{Remark}

\numberwithin{equation}{section}
\numberwithin{theorem}{section}

\newcommand{\half}{\frac{1}{2}}
\newcommand{\frn}{\frac{n}{2}}
\newcommand{\frp}{\frac{1}{p}}
\newcommand{\fraq}{\frac{1}{q}}
\newcommand{\fro}{\frac{1}{p_0}}
\newcommand{\frqo}{\frac{1}{q_0}}
\newcommand{\eye}{\int\limits_y^1}
\newcommand{\ene}{\mathbb{N}}
\newcommand{\ar}{\mathbb{R}}
\newcommand{\arn}{{\mathbb{R}}^n}
\newcommand{\artwo}{{\mathbb{R}}^2}
\newcommand{\arnn}{{\mathbb{R}}^{4n}}
\newcommand{\ardn}{{\mathbb{R}}^{n}\times{\mathbb{R}}^{n}}
\newcommand{\arnm}{{\mathbb{R}}^{n+m}}
\newcommand{\noi}{\noindent}
\newcommand{\ind}{\indent}
\newcommand{\epsi}{\epsilon}
\newcommand{\egal}{\stackrel{\rm def}{=}}
\newcommand{\parti}{\frac{\partial }{\partial t}}
\newcommand{\partu}{\frac{\partial u}{\partial t}}
\newcommand{\partuu}{\frac{\partial ^2 u}{\partial t^2}}
\newcommand{\partt}{\frac{\partial}{\partial t}}
\newcommand{\bi}{\begin{itemize}}
	\newcommand{\ei}{\end{itemize}}
\newcommand{\be}{\begin{enumerate}}
	\newcommand{\ee}{\end{enumerate}}
\newcommand{\beq}{\begin{equation}}
\newcommand{\eq}{\end{equation}}
\newcommand{\lj}{L^2J^{-2}}
\def\Rn{{{\mathbb R}^n}}
\DeclareMathOperator*{\esssup}{ess\,sup}
\DeclareMathOperator*{\essinf}{ess\,inf}
\DeclareMathOperator*{\sign}{sign\,}

\begin{document}

\title[On the differentiability of the Sobolev norm of $W^{k,1}(M)$ ]{On the differentiability of the Sobolev norm of $W^{k,1}(M)$ on  compact Riemannian manifolds}
\author[D. Cardona]{Duv\'an Cardona}
\address{
  Duv\'an Cardona:
  \endgraf
  Department of Mathematics: Analysis, Logic and Discrete Mathematics
  \endgraf
  Ghent University, Belgium
  \endgraf
  {\it E-mail address} {\rm duvanc306@gmail.com, duvan.cardonasanchez@ugent.be}
  }
  
  \author[J. Delgado]{Julio Delgado*}
\address{
  (*Corresponding Author) Julio Delgado:
  \endgraf
  Departamento de Matem\'aticas
  \endgraf
  Universidad del Valle
  \endgraf
  Cali-Colombia
    \endgraf
    {\it E-mail address} {\rm delgado.julio@correounivalle.edu.co}
  }

  \author[A Munoz]{Andres Felipe Muñoz-Tello}
\address{
   Andres Felipe Muñoz-Tello:
  \endgraf
  Facultad de Ciencias Basicas
  \endgraf
  Universidad Santiago de Cali
  \endgraf
  Cali-Colombia
    \endgraf
    {\it E-mail address} {\rm  andres.munoz00@usc.edu.co}
  }

\subjclass[2020]{Primary 58C20; Secondary 58C25}

\keywords{G\^ateaux differentiability, Set of differentiability, Sobolev spaces}

\date{\today}

\begin{abstract}
    %------------------------- Please type your abstract here ------------------
    In this work we study the differentiability for  the Sobolev norm  of $W^{k,1}(M)$ in the sense of G\^ateaux , where $(M,g)$ is an arbitrary closed Riemannian manifold.
%---------------------------------------------------------------------------
\end{abstract}

\maketitle

\section{Introduction}
%Let $\Omega$ be an open connected subset of $\arn$ we consider the Sobolev space $W^{k,1}(\Omega)$ where $k$ is an integer $\geq 1$.\\
%We study the G\^ateaux differentiability of the norm\\

Let $(M,g)$ be a closed Riemannian manifold and $W^{k,p}(M)$ the corresponding Sobolev space of order $k$ with respect to $L^p(M)$, where  $L^p(M)$ is endowed with  a Riemannian volume form $d\nu(g)$ and $d\nu(g)=\sqrt{\det(g_{ij})}dx,$ and $dx$ stands for the Lebesgue volume element of $\mathbb{R}^n,$ $n=\dim(M)$ (see Definition 2.3).  Herein we find the set of G\^ateaux differentiability for  the Sobolev norm  of $W^{k,1}(M)$. The results also hold for open sets in the euclidean space. We point out  that the case of the spaces $W^{k,p}$, when $1<p<\infty$ boils down to the Fr\'echet differentiability of the $L^p$ norm. The case of our interest  $p=1$ is more pathological. \\

The problem of finding the  set of differentiability of Lipschitz functions was initially considered by Rademacher in \cite{ra}, where he states his classical theorem: 
\begin{center}
	{\it Every Lipschitz function $f:\mathbb{R}^{n}\rightarrow \mathbb{R}$ is differentiable almost everywhere.}
\end{center}
More generally, an extension due to Stepanov shows that the same conclusion holds for locally Lipschitz functions $f:\mathbb{R}^{n}\rightarrow \mathbb{R}^m$. The Rademacher-Stepanov theorem is a fundamental tool in the study of differentiability and is at the basis of some hallmarks, as the Whitney's extension theorem (cf. \cite{wh1}, \cite{wh2}, \cite{wh3}) and its further recent developmets by Fefferman (cf. \cite{fe1}, \cite{fe2}) and Figalli \cite{fi}. The Rademacher-Stepanov theorem can also be deduced from the theory of Sobolev spaces as a consequence of a Calderon's result. Indeed,  if $f\in W_{loc}^{1,\,p}(\arn)$ and $n< p\leq \infty$, then $f$ is differentiable almost everywhere and its derivative equals its weak derivative almost everywhere. Consequently, since every locally Lipschitz function $f$ belongs to $W_{loc}^{1,\,\infty}(\arn)$. Then,   $f$ is differentiable almost everywhere (cf. Chapter 6 of  \cite{ev}.). We would also point out that the Rademacher–Stepanov theorem has been proved in the setting of sub-Riemannian geometry in \cite{Vodopyav}. On the other hand, an important contribution to the differentiability of Sobolev functions with respect to  Sobolev norms has been developed in \cite{golds} and \cite{Vodopyav1}. \\

The converse of Rademacher Theorem  has been recently established by Preiss and Speight (cf. \cite{ps}). Indeed, they prove that: if $n>1$ then there exists a Lebesgue null set in $\mathbb{R}^{n}$ containing a point of differentiability of each Lipschitz function $f: \mathbb{R}^{n} \rightarrow \mathbb{R}^{n-1}$. A more detailed discussion around the 
Rademacher's Theorem and some converse  versions can be found in \cite{preim4}. \\

An extension of Rademacher-Stepanov Theorem to functions defined on separable Banach spaces and valued into Banach spaces was established by Phelps in~\cite{ph}: 

\begin{theorem}[Phelps]\label{phelps}
	Let $E$ be a separable real Banach space and $F$  a real Banach space with  the 	Radon-Nikod\'{y}m property (RNP). If $T: G \rightarrow F$ is a locally Lipschitz mapping  defined on a nonempty open subset  $G$  of $E$. Then $T$ is G\^ateaux differentiable in the complement of a Gaussian null subset of $G$.	
\end{theorem}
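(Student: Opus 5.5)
We prove Phelps' theorem through the standard Aronszajn--Phelps scheme. First reduce to a scalar-type question along single directions, then exploit the RNP on each line, and finally assemble the lines using the structure of Gaussian null sets. Since $E$ is separable, we choose a sequence $(e_k)$ whose linear span is dense in $E$; we may take $\|e_k\|$ small and summable. For each $k$, let $A_k$ be the set of $x\in G$ at which the one-sided directional derivative $T'(x;e_k)=\lim_{t\to0}(T(x+te_k)-T(x))/t$ fails to exist in $F$.

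\emph{Step 1 (one-dimensional differentiability).} For a fixed $x$, the map $t\mapsto T(x+te_k)$ is locally Lipschitz from an open subset of $\mathbb{R}$ into $F$. Because $F$ has the RNP, every Lipschitz map from an interval into $F$ is differentiable almost everywhere; this is one of the standard equivalent characterizations of the RNP. Consequently each line parallel to $e_k$ meets $A_k$ in a set of one-dimensional Lebesgue measure zero. We also check that $A_k$ is Borel: using continuity of $T$, express $A_k$ via difference quotients over rational $t$ and the Cauchy criterion. By Fubini, $A_k$ is then null for every nondegenerate Gaussian measure whose one-dimensional factor sits in direction $e_k$.

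\emph{Step 2 (Gaussian nullness and linearity).} A Borel set is Gaussian null when it is null for every nondegenerate Gaussian measure on $E$. Since $\mu(A_k)=0$ for all such $\mu$, each $A_k$ is Gaussian null. This uses the decomposition of a nondegenerate Gaussian measure as a product along $e_k$ when $e_k$ lies in its Cameron--Martin space; if it does not, we use translates and convolution invariance. Countable unions of Gaussian null sets are Gaussian null, so $A=\bigcup_k A_k$ is Gaussian null. Next, at points outside a further Gaussian null set $B$, we need the map $v\mapsto T'(x;v)$ to be linear on $\operatorname{span}\{e_k\}$ over the rationals. This is the main obstacle. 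We handle it by applying Step 1 to the countably many directions $q_1e_{k_1}+\dots+q_m e_{k_m}$ with rational $q_i$. To get additivity, we use the Lipschitz estimate $\|T(x+su+sv)-T(x+su)-T(x+sv)+T(x)\|$ together with a Fubini/density argument. Phelps' argument shows that the set where $T'(x;u+v)\neq T'(x;u)+T'(x;v)$ is null on lines in direction $u$, since the derivative in direction $v$ exists almost everywhere along those lines and the Lipschitz bound lets it be integrated.

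\emph{Step 3 (extension to all of $E$).} Fix $x\notin A\cup B$ and define $L(v)=T'(x;v)$ on the dense rational span $D$. Local Lipschitzness gives $\|L(v)-L(w)\|\le \mathrm{Lip}\,\|v-w\|$, so $L$ extends to a bounded linear operator on $E$. For arbitrary $v\in E$, approximate it by some $w\in D$ and use
\[
\Bigl\|\tfrac{T(x+tv)-T(x)}{t}-\tfrac{T(x+tw)-T(x)}{t}\Bigr\|\le \mathrm{Lip}\,\|v-w\|
\]
uniformly in small $t$. This shows that $T'(x;v)$ exists and equals $Lv$. Hence $T$ is G\^ateaux differentiable on $G\setminus(A\cup B)$, and $A\cup B$ is Gaussian null.
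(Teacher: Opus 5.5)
\textbf{There is a genuine gap in Step 2.} The paper gives no proof of this statement (it is quoted from Phelps), so your argument has to stand on its own.

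Step 2 infers that $A_k$ is Gaussian null because it meets every line parallel to $e_k$ in a null set. That inference is false in infinite dimensions, and translates cannot rescue it. A Gaussian measure $\mu$ is quasi-invariant only under shifts by vectors of its Cameron--Martin space $H_\mu$. For $u\notin H_\mu$, the shifted measures $\mu(\cdot-tu)$ with $t\neq 0$ are singular to $\mu$.

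A concrete counterexample: let $\mu$ be centered with $u\notin H_\mu$. Then $\sup\{x^*(u):\|x^*\|_{L^2(\mu)}\le 1\}=\infty$, so there are $x_n^*\in E^*$ with $x_n^*(u)=1$ and $\int|x_n^*|\,d\mu\le 2^{-n}$. The Borel set $Q=\{x: x_n^*(x)\to 0\}$ has $\mu(Q)=1$. Yet $x_n^*(x+tu)=x_n^*(x)+t$, so $Q$ meets every line $x+\mathbb{R}u$ in at most one point.

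Every $u\neq 0$ lies outside $H_\mu$ for some nondegenerate $\mu$ (push a given one forward by an invertible rank-one perturbation of the identity). Hence no sequence $(e_k)$ fixed in advance can work. Your argument only shows that the exceptional set is a countable union of Borel sets, each null on all lines parallel to some vector of the fixed set $D$. That is strictly weaker than Gaussian nullness.

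\textbf{The fix is to choose the directions after the measure.}
\begin{enumerate}
\item Let $N$ be the set where $T$ is not G\^ateaux differentiable. Your Step 3 shows that $x\notin N$ iff $T'(x;v)$ exists for all $v$ in the rational span of \emph{any} sequence with dense span, and is additive there. So $N$ is Borel and independent of the sequence.
\item Fix a nondegenerate Gaussian $\mu$. Nondegeneracy means $H_\mu$ is dense, so take $(e_k)\subset H_\mu$; all rational combinations then lie in $H_\mu$ too.
\item For $h\in H_\mu$ and a Borel set $S$ null on every line parallel to $h$, Fubini gives $\int_{\mathbb{R}}\mu(S-th)\rho(t)\,dt=0$ for a Gaussian density $\rho$. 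Hence $\mu(S-th)=0$ for some $t$, and quasi-invariance yields $\mu(S)=0$.
\item Running Steps 1--3 with these directions gives $\mu(N)=0$. Since $\mu$ was arbitrary, $N$ is Gaussian null.
\end{enumerate}
This is Phelps' route. Equivalently, $N$ is Aronszajn null because your decomposition works for every dense-span sequence, and Aronszajn null sets are Gaussian null.

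\textbf{A smaller point in the additivity step.} $T'(\cdot;v)$ need not exist a.e.\ on lines parallel to $u$. What is true, and suffices, is this: on every such line, at almost every point where $T'(x;u)$ and $T'(x;v)$ both exist, $T'(x;u+v)$ exists and equals their sum. To prove it, apply Lusin and Egorov along the line to $\sup_{0<|t|<\delta}\|t^{-1}(T(y+tv)-T(y))-T'(y;v)\|$. Then use the Lipschitz bound and Lebesgue density to pass from $y=x+tu$ to nearby good points.
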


Other extensions of Rademacher-Stepanov Theorem to Lipschitz maps between infinite-dimensional Banach spaces have been investigated by several authors e.g. Aronszajn \cite{ar}, Bongiorno~\cite{bon}, Mankiewicz \cite{man}, Phelps~\cite{ph}, Preiss and Zaj\'i\v{c}ek \cite{pz}, among others). The research on the differentiability on Banach spaces has been a very active field  in the last 
decades. We just quote some  few recent examples: In  Potapov and Sukachev's work~\cite{ps14}, they answer to a question in the theory of Schatten- von Neumann ideals of whether their norms have the same differentiability properties as the norms of their commutative counterparts. On the other hand Lindenstrauss et al.~\cite{lin} prove that a real valued Lipschitz function on an Asplund space has points of Fr\'echet differentiability. In ~\cite{gol} Goldys and Gozzi  apply the Phelps Theorem in the proof of existence and uniqueness of solutions of second order parabolic Hamilton-Jacobi-Bellman equations in Hilbert spaces, obtaining optimal feedback for first order stochastic PDEs that arise in economic theory, the theory of population dynamics and financial models. We also point out that the $L^1$-norm has been of recent interest in the analysis of big data due to its major role in minimisations problems \cite{Yimaw}. The study of the set of differentiability of Lipschitz functions on Banach spaces has been carried out for instance in Deville et al.~\cite{de}, Diestel~\cite{di}, Dore and Maleva~\cite{dore} among others. \\
  
We also point out that on compact Riemannian manifolds, the $L^p$ spaces being  totally ordered by inclusion, the $L^1(M)$ space arises as a maximal one with respect to $p$ in the range $1\leq p\leq\infty$ and  the norm $\|\cdot\|_{L^1(M)}$ as a minimal one. In a  future work we will be studying some geometric properties with respect to the $L^1(M)$ norm and  level sets using a geometric characterization as in \cite{CLin}. It is also reasonable to expect some extensions of our results to compact manifolds with boundary and contact manifolds \cite{sas}. Other recent works on Sobolev spaces can be found in  \cite{Agrawaverma}, \cite{Aramaki} and \cite{Zhang}. \\

The main result of this work  establishes the set of G\^ateaux differentiability of the the Sobolev norm  of $W^{k,1}(M)$ and is stated in  Theorem \ref{main1a}, giving also the corresponding formulae for the G\^ateaux derivative. These results extend a $L^1$ version obtained in \cite{detello} for a suitable family of measure spaces. Herein we focous on a version on manifolds thinking on potential applications to differential equations or variational calculus on manifolds. 

%The case of Fr\'echet differentiability for $p=1$  is more pathological. Here****************** we will explicitly find the set of  G\^ateaux differentiability for the  $L^{1}(\Omega, \mu)$ norm, where $(\Omega,\cal{M}, \mu)$ is a measure space. The characterization of this set is given in Theoremark \ref{main1a} of Section 3.

%However the $L^1$ case is pathological. Since $L^1$ is separable, there is not an admissible norm of class $C^1$.****
%Since $L^1$ is separable and $(L^1)'=L^{\infty}$, there is not an admissible norm and $F$-differentiable on $L^1$. Moreover the standard $L^1$ norm is not Fr\'echet differentiable  at any point.... 

%We will prove that the measure for the complement of the differentiability set with the norm of the supremarke in $\ell^{\infty}(\mathbb{R})$ is zero. In addition, for a measure space ($\Omega$,$\mathcal{A},\mu$), we will show that for a norm defined in $L^{\infty}(\Omega)$ space, there is no derivative at any point belonging to this Banach space.

\section[Preliminaries]{Preliminaries}
In this section we briefly recall a basic background. We start with 
 the  classical notions of differentiability on Banach spaces. Secondly, we  introduce our  scale of Sobolev spaces on Riemannian manifolds.
\subsection{G\^ateaux differentiability on Banach spaces}
  The most simple notion of differentiability on Banach spaces is the one of G\^ateaux differentiability. Let $X$, $Y$ be  Banach spaces and $\Omega\subseteq X$ an open set. A function $f:\Omega\rightarrow Y$ is {\it G\^ateaux differentiable ($G$-differentiable)} at a point $a\in\Omega$, if 
\[\lim\limits_{t\rightarrow 0} \frac{f(a+th)-f(a)}{t}=u \left(h\right)\]
exists for all $h\in X$, and $u$ defines  a bounded linear operator from $X$ into $Y$. The operator $u$ is called the G\^{a}teaux derivative of $f$ at the point $a$ and is denoted by $\partial_{G}f(a)$. \\

A function $f:\Omega \rightarrow Y$ is {\it Fr\'echet differentiable ($F$-differentiable)} at $a$, if there is a bounded linear operator $u$ such that
\[f\left(a+h\right)-f\left(a\right)-u\left(h\right)=r\left(h\right)\] where $\lim\limits_{h\rightarrow 0} \frac{\left\|r\left(h\right)\right\|_{Y}}{\left\| h \right\|_{X}}=0$. The operator $u$ is called the Fr\'{e}chet derivative of $f$ at the point $a$  and is denoted by $\partial_{F}f(a)$.\\

It is not difficult to see that, if $f$ is   $F$-differentiable at $a$ then $f$ is $G$-differentiable at $a$ and in that case $\partial_{F}f(a)=\partial_{G}f(a)$.  For a given  function $\varphi:X\rightarrow Y$, we say that $B$ is the set of $G$-differentiability or $F$-differentiability of $\varphi$, if respectively for every element of the set $B$ the function $\varphi$ is $G$-differentiable or $F$-differentiable.\\

On the other hand, a set is called a {\it $G_{\delta}$ set} if it can be expressed as a countable intersection of open sets. This type of set is fundamental in the definition of the {\it F-Asplund  and $G$-Asplund} spaces, since these respectively are Banach spaces in which all convex and continuous function defined in an open and convex subset is $F$-differentiable (resp. $G$-differentiable) in a dense and $G_{\delta}$ set. The following theorem shows  important examples of $G$-Asplund spaces.

\begin{theorem}[Mazur]\label{a4}
	Every separable Banach space is a $G$-Asplund space.
\end{theorem}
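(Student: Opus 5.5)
To prove Mazur's theorem, let $X$ be a separable Banach space, $U\subseteq X$ open and convex, and $f:U\to\mathbb{R}$ convex and continuous. We must show that the set of points where $f$ is G\^ateaux differentiable is a dense $G_\delta$ subset of $U$. The plan is to reduce G\^ateaux differentiability to countably many directional conditions, using separability, and then apply the Baire category theorem.

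\emph{Step 1: local Lipschitz property and one-sided derivatives.} A continuous convex function is locally Lipschitz. Indeed, continuity at a point gives local boundedness above, and convexity then yields a Lipschitz bound $L_x$ on a neighbourhood of each $x\in U$. For $x\in U$ and $h\in X$, the difference quotient $t\mapsto (f(x+th)-f(x))/t$ is nondecreasing on $t>0$. Hence the one-sided derivative $d^+f(x)(h)=\lim_{t\downarrow 0}(f(x+th)-f(x))/t$ exists. It is sublinear in $h$ and satisfies $|d^+f(x)(h)|\le L_x\|h\|$. Consequently $f$ is G\^ateaux differentiable at $x$ if and only if $d^+f(x)(h)+d^+f(x)(-h)=0$ for all $h$. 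In that case $d^+f(x)$ is linear, being sublinear with $d^+f(x)(-h)=-d^+f(x)(h)$, and bounded.

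\emph{Step 2: reduction to a countable dense set.} Fix a countable dense set $\{h_k\}$ in the unit sphere of $X$. If $d^+f(x)(h_k)+d^+f(x)(-h_k)=0$ for all $k$, then the Lipschitz bound $|d^+f(x)(h)-d^+f(x)(h')|\le L_x\|h-h'\|$ extends the identity to every unit $h$, and by positive homogeneity to all $h$. Hence the differentiability set is
\[
D=\bigcap_{k}\bigcap_{m} G_{k,m},\qquad G_{k,m}=\{x\in U:\ d^+f(x)(h_k)+d^+f(x)(-h_k)<1/m\}.
\]

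\emph{Step 3: each $G_{k,m}$ is open and dense.} By monotonicity of the difference quotients, $x\in G_{k,m}$ if and only if there exists $t>0$ with
\[
\frac{f(x+th_k)+f(x-th_k)-2f(x)}{t}<\frac1m .
\]
For fixed $t$ this is an open condition in $x$, by continuity of $f$. Taking the union over $t$ shows that $G_{k,m}$ is open.

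For density, fix $y\in U$. The one-variable convex function $\phi(s)=f(y+sh_k)$ is differentiable at all but countably many $s$. So there are arbitrarily small $s$ at which $\phi'(s^+)=\phi'(s^-)$, i.e. $d^+f(y+sh_k)(h_k)+d^+f(y+sh_k)(-h_k)=0$. Each such point $y+sh_k$ lies in $G_{k,m}$ and is arbitrarily close to $y$.

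\emph{Step 4: conclusion.} The open set $U$ is a Baire space, so $D$ is a countable intersection of open dense subsets of $U$ and is therefore a dense $G_\delta$.

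The main obstacle is Step 3, in particular expressing membership in $G_{k,m}$ as an existential condition over $t$ so that openness follows. This hinges on the monotonicity of convex difference quotients, which is the key point of the argument.
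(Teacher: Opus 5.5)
The paper gives no proof of this statement. Mazur's theorem is only quoted as classical background, so there is no argument of the paper's to compare yours with.

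Your proof is correct and complete. It is essentially the standard textbook proof, as in Phelps' monograph on convex functions and differentiability. Its ingredients are:
\begin{enumerate}
\item the countable family $G_{k,m}$ defined through the symmetric difference quotient along a dense sequence $\{h_k\}$ in the unit sphere;
\item openness of each $G_{k,m}$, from the monotonicity of convex difference quotients, which turns ``$\lim_{t\downarrow 0}<1/m$'' into ``$\exists\, t>0$'';
\item density, from the one-variable fact that a convex function on an interval is differentiable off a countable set;
\item Baire's theorem in the open set $U$.
\end{enumerate}

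Three small points deserve a line each in a written version.
\begin{itemize}
\item The identity $D=\bigcap_{k,m}G_{k,m}$ needs $d^+f(x)(h)+d^+f(x)(-h)\ge 0$, so that ``$<1/m$ for all $m$'' forces equality with $0$. This follows from sublinearity: $0=d^+f(x)(0)\le d^+f(x)(h)+d^+f(x)(-h)$.
\item In the openness argument, the condition for a fixed $t$ should be read on the open set $U\cap(U-th_k)\cap(U+th_k)$, where $x\pm th_k\in U$.
\item Each $G_{k,m}$ is open in $X$, because $U$ is open. Hence $D$ is a $G_\delta$ in $X$, not only relative to $U$.
\end{itemize}
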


Moreover, it is well known that the set of all $F$-Asplund spaces is strictly contained in the set of all $G$-Asplund spaces. In fact, as proved in Deville et al.~\cite{de}, the usual norm of $\ell^{1}(\mathbb{R})$ is not $F$-differentiable at  any point of the space $\ell^{1}(\mathbb{R})$. Then $\ell^{1}(\mathbb{R})$ is not a $F$-Asplund space; but $\ell^{1}(\mathbb{R})$ is a separable Banach space and therefore a $G$-Asplund space by  Mazur's theorem.\\

As we have quoted before, the $L^p$ norm is $F$-differentiable at any point in the case  $1<p<\infty$ and its corresponding derivative is given by \eqref{formF1q}. 
 Moreover, since the dual of $L^{p}(\Omega)$ is $L^{q}(\Omega)$, for $1<p<\infty $ and $\frac{1}{p}+\frac{1}{q}=1$, and they are separable Banach spaces, by  Theorem 2.12 in Phelps~\cite{ph2} the space $L^{p}(\Omega)$ is a $F$-Asplund space for $1<p<\infty$.

%If $p=1$, then the norm of $L^{1} (\Omega)$ is nowhere $F$-differentiable, which leads us to add the remarkark described below. ***********????

\begin{remark} In the special case of the $L^{1}(\Omega,\cal{F}, \mu)$ norm we observe the following regarding  the set of $G$-differentiability. It is well known that(cf. \cite{cohn})  $L^1(\Omega,\cal{F}, \mu)$ is separable provided $\mu$ is $\sigma$-finite and the  $\sigma$-algebra $\cal{F}$ is countably generated. Since $\mathbb{R}$ has the RNP, then the $L^{1}(\Omega, \mu)$ norm satisfies the conditions of  Phelps's Theorem. Therefore, the Gaussian measure of the complement of the $G$-differentiability set is zero and  by Mazur's Theorem this $G$-differentiability set is dense and a $G_{\delta}$ set.	
\end{remark}	

%In the next section we will give an explicit description of the set of $G$-differentiability for the  usual norm of $L^{1}(\Omega, \cal{M}, \mu)$ without assuming  the condition of separability. 

%However, for the pourposes of this manuscript we first present the preliminaries about the scale of Sobolev spaces Riemannian manifolds.

\subsection{Sobolev spaces on Riemannian manifolds}

Let $(M,g)$ be a Riemannian manifold. For any $k\in \mathbb{N}$ and for $u\in C^{\infty}(M),$ $\nabla^{k}u$ denotes the $k$-th
covariant derivative of $u,$ (with $\nabla^{0}u:=u$). In order to illustrate this notion, we observe that in local coordinates the components of $\nabla u$ take the form $(\nabla u)_i=\partial_{i}u,$ while the components of $\nabla^2 u$ in local coordinates are given by
\[ (\nabla^2 u)_{ij}=\partial_{ij}u-\Lambda^{k}_{ij}\partial_k u.  \]
The notation $\Lambda^{k}_{ij}$ stands for the Christoffel symbols with respect to the (unique) Riemannian connection.\\

In general, by definition one has that
\begin{equation}
   |\nabla^k u|^2=g^{i_1j_1}\cdots g^{i_kj_k}(\nabla^k u)_{i_1\cdots i_k}(\nabla^k u)_{j_1\cdots j_k}. 
\end{equation} For every $k\geq 1,$ we denote by $C^{p}_{k}(M)$ the spaces of smooth functions $u\in C^\infty(M),$ such that $|\nabla u|\in L^p(M)$ for any $0\leq j\leq k.$ Hence
\[ 
   C^{p}_{k}(M)=\{u\in C^\infty(M): \int_{M}|\nabla^j u|^pd\nu(g)<\infty,\,0\leq j\leq k\} 
\]where in local coordinates $d\nu(g)=\sqrt{\det(g_{ij})}dx,$ and $dx$ stands for the Lebesgue volume element of $\mathbb{R}^n,$ $n=\dim(M).$ If $M$ is compact one has that $C^{p}_{k}(M)=C^\infty(M).$ Now, with the notation above, one can introduce in a global way the definition of the Sobolev spaces, see Hebey \cite{Hebey}. 
\begin{definition}Let $k\geq 1$ and let $1\leq p<\infty.$ The Sobolev space $W^{k,p}(M,g)$ consists of the completion of $C^{p}_{k}(M)$ with respect to the norm
\begin{equation}
    \Vert u\Vert_{\tilde{W}^{k,p}(M,g)}=\left(\sum_{j=0}^k\int_{M}|\nabla^j u|^pd\nu(g)\right)^{\frac{1}{p}}.
\end{equation}
\end{definition}
In particular, if $(M,g)$ is a compact and without boundary let us follow Taylor \cite[Page 25]{Taylorbook1981}. There is a global formulation for the space $W^{k,p}(M,g)$ in terms of vector fields. In this (exceptional) case, $M$ being a closed manifold, the space $W^{k,p}(M)=W^{k,p}(M,g)$ is independent of the metric $g$ (indeed, if $M$ is not compact, in general, $(M,g)$ depends of $g,$ which can be proved if one considers the case of  Gaussians and non-Gaussian metrics). The fact of the independence of $W^{k,p}(M,g)$ with respect to the choice of the Riemmanian metric $g$ is well-known, but we recall its proof for the convenience of the reader. 
\begin{prop} 
 If $(M,g)$ is a closed Riemmanian manifold, then the Sobolev space $W^{k,p}(M,g)$ is independent of the Riemmanian metric.
  \end{prop}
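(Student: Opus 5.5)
Given two Riemannian metrics $g$ and $\tilde g$ on the closed manifold $M$, I will show that the norms $\Vert\cdot\Vert_{\tilde W^{k,p}(M,g)}$ and $\Vert\cdot\Vert_{\tilde W^{k,p}(M,\tilde g)}$ are equivalent on $C^\infty(M)$. Since $M$ is compact, $C^p_k(M)=C^\infty(M)$ for both metrics, so both Sobolev spaces are completions of the same space. Equivalent norms give the same completion, with the identity extending to an isomorphism.

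The first step is a finite atlas. By compactness I cover $M$ by finitely many coordinate charts $(U_\alpha,x_\alpha)$, $\alpha=1,\dots,N$, and shrink them so that each $\overline{U_\alpha}$ lies in a slightly larger chart. I choose a smooth partition of unity $\{\psi_\alpha\}$ subordinate to $\{U_\alpha\}$. On each $\overline{U_\alpha}$, the coefficients $g_{ij}$, $g^{ij}$, $\sqrt{\det g_{ij}}$ and the Christoffel symbols $\Lambda^k_{ij}$, together with all their derivatives up to order $k$, are continuous on a compact set and hence bounded. The same holds for $\tilde g$. Because $g$ is positive definite and continuous on a compact set, there is a constant $C\ge1$ with $C^{-1}\delta_{ij}\le g_{ij}\le C\delta_{ij}$ in each chart, and similarly for $\tilde g$.

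The second step is a local comparison: for each $\alpha$ and $u\in C^\infty(M)$,
\[
C^{-1}\sum_{|\beta|\le k}\int_{U_\alpha}|\partial^\beta u|^p\,dx\le \sum_{j=0}^k\int_{U_\alpha}|\nabla^j u|_g^p\,d\nu(g)\le C\sum_{|\beta|\le k}\int_{U_\alpha}|\partial^\beta u|^p\,dx .
\]
The volume density is pinched between positive constants. In local coordinates, $(\nabla^j u)_{i_1\cdots i_j}$ is $\partial_{i_1\cdots i_j}u$ plus a linear combination of lower-order partials $\partial^\gamma u$, $|\gamma|<j$, with coefficients that are polynomials in the Christoffel symbols and their derivatives, and these coefficients are bounded. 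This gives the upper bound. For the lower bound I invert the triangular relation by induction on $j$: $\partial_{i_1\cdots i_j}u$ equals $(\nabla^j u)_{i_1\cdots i_j}$ minus bounded combinations of the lower partials, which by induction are controlled by $\nabla^m u$ for $m<j$. Finally, the uniform ellipticity of $g^{ij}$ makes $|T|_g$ comparable to the Euclidean norm of the components of the tensor $T$. Using $(a_1+\dots+a_m)^p\le m^{p-1}\sum_i a_i^p$, all pointwise comparisons pass to the $p$-th power integrals.

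The third step globalizes. Summing the upper local bound over $\alpha$ gives $\Vert u\Vert^p_{g}\le C\sum_\alpha\sum_{|\beta|\le k}\int_{U_\alpha}|\partial^\beta u|^p\,dx$, and the lower bound gives the reverse inequality up to a factor $N$ from the overlap of the charts. Hence each of $\Vert u\Vert_{g}$ and $\Vert u\Vert_{\tilde g}$ is equivalent to the metric-free quantity $\big(\sum_\alpha\sum_{|\beta|\le k}\Vert\partial^\beta u\Vert^p_{L^p(U_\alpha,dx)}\big)^{1/p}$, and therefore the two are equivalent to each other. Alternatively, one can use the $\psi_\alpha$ to write $u=\sum\psi_\alpha u$ and apply the Leibniz rule.

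The main obstacle is the lower bound in the local comparison, namely inverting the expression for $\nabla^j u$ in terms of partial derivatives with uniformly bounded coefficients. The triangular structure of this expression, with leading term $\partial_{i_1\cdots i_j}u$, makes the induction go through. This is also the only place where compactness, through the finite atlas and the bounded coefficients, is essential.
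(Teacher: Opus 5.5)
Your proof is correct, but it takes a different route from the paper's.

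The paper localizes with a partition of unity $\eta_\ell$ and writes $u=\sum_\ell \eta_\ell u$. It then compares $\Vert \eta_\ell u\Vert_{W^{k,p}(M,g)}$ with $\Vert \eta_\ell u\Vert_{W^{k,p}(M,\tilde g)}$ directly in a single chart, using only the pinching $C^{-1}\tilde g_{ij}\le g_{ij}\le C\tilde g_{ij}$. Its displayed chain tracks only the change of volume density, $\sqrt{\det g_{ij}}$ versus $\sqrt{\det \tilde g_{ij}}$. It tacitly treats $|\nabla^j(\eta_\ell u)|$ as the same quantity for both metrics, although $\nabla^j$ and $|\cdot|$ both depend on the metric. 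It also leaves implicit the Leibniz-rule bound $\Vert \eta_\ell u\Vert\le C\Vert u\Vert$, which is needed to get from the pieces back to $u$.

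You instead compare each metric norm with the fixed, metric-free chart-wise quantity $\sum_\alpha\sum_{|\beta|\le k}\Vert\partial^\beta u\Vert^p_{L^p(U_\alpha,dx)}$. This needs no partition of unity, only the finite covering and the resulting $N$-fold overlap bound. You also handle explicitly the two points the paper's chain passes over:
\begin{itemize}
\item the Christoffel-symbol lower-order terms in $(\nabla^j u)_{i_1\cdots i_j}$, which you invert through the triangular structure;
\item the comparison of $|T|_g$ with the Euclidean size of the components of $T$.
\end{itemize}

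The paper's approach is shorter and compares $g$ and $\tilde g$ head-on, but as written it is only a sketch. Yours actually establishes equivalence of the two norms on $C^\infty(M)$, and hence the identification of the completions. As a byproduct, it shows both norms are equivalent to the standard coordinate $W^{k,p}$ norm, which is what underlies the vector-field description of $W^{k,p}(M)$ the paper uses afterwards.
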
  
\begin{proof} Due to the compactness of $M$, it can be covered by a finite number of charts $(\Omega_m, \phi_m)$ $m=1,2,\dots, N$. Given two Riemmanian metrics $g, \tilde{g}$, there exists a constant $C>1$  such that for any $m$, the components $g_{ij}^m$ of $g$ in $(\Omega_m, \phi_m)$ satisfy  \\ 
\[\frac{1}{C}\tilde{g}_{ij}\leq g_{ij}^k\leq C \tilde{g}_{ij}.\]
We now consider a smooth partition of unity $\eta_m$ subordinate to the covering  $(\Omega_m)$. If  $u\in W^{k,p}(M,\tilde{g})$, we can write $u=\sum_{k=1}^N \eta_{\ell}u$ and
 we have
\begin{align*} \|\eta_{\ell}u\|_{W^{k,p}(M,g)}=& \left(\sum_{j=0}^k\int_{M}|\nabla^j \eta_{\ell}u|^pd\nu(g)\right)^{\frac{1}{p}} \\
=& \left(\sum_{j=0}^k\int_{\phi_{\ell}(\Omega_{\ell})}|\nabla^j\eta_{\ell}u\circ \phi_{\ell}^{-1}|^p\sqrt{det(g_{ir}^{\ell})}d\nu(g)\right)^{\frac{1}{p}} \\
\leq& C_1\left(\sum_{j=0}^k\int_{\phi_{\ell}(\Omega_{\ell})}|\nabla^j\eta_{\ell}u\circ \phi_{\ell}^{-1}|^p\sqrt{det(\tilde{g}_{ir}^{\ell})}d\nu(g)\right)^{\frac{1}{p}} \\
=& C_1\left(\sum_{j=0}^k\int_{M}|\nabla^j\eta_{\ell}u|^pd\nu(g)\right)^{\frac{1}{p}} \\
=&\|\eta_{\ell}u\|_{W^{k,p}(M,\tilde{g})}.
\end{align*}
Hence  $u\in W^{k,p}(M,{g})$. The other inclusion can be proved in a similar way and this completes the proof.
\end{proof}

On the other hand, one can prove that $u\in W^{k,p}(M),$ if and only for every  arbitrary family of smooth vector fields $X_{j_i}\in TM,$ where  $1\leq j_i\leq \ell,$ and $\ell \leq k,$ one has that
\[ X_{j_1}X_{j_2}\cdots X_{j_\ell} u\in L^{p}(M).  \]
In such a case one has the equivalent norm
\begin{equation}
     \Vert u\Vert_{{W}^{k,p}(M)}=\left(\sum_{|\alpha|\leq k}\int_{M}|D^{\alpha} u|^pd\nu(g)\right)^{\frac{1}{p}},
\end{equation}where we have used the multi-index notation 
\begin{equation}\label{alpha}
 D^{\alpha}:=X_{j_1}^{\alpha_1}\cdots X_{j_\ell}^{\alpha_\ell},   
\end{equation}
for an arbitrary choice of $\alpha=(\alpha_1,\cdots , \alpha_\ell).$\\

We recall that in any of the above cases, the elements of Sobolev spaces should be understood as equivalence classes, where $f\sim g$ if $\|f-g\|_{W^{k,1}(M)}=0$.\\

%We point out that a different approach to define Sobolev spaces as the one introduced by Besov and Nikol'skii where the notion of capacity is introduced *****
%gives rise to the same space in the case of closed manifolds. In our case $k$ being a positive integer, the situation is more comfortable. Being in the case of closed manifolds and positive integers $k$ lead us to perhaps the most comfortable situation.

 The notion of capacity may lead to some further properties on the set of G\^ateaux differentiability and we will be addressing this problem in a next work, as well as the study of the set of G\^ateaux differentiability for the norms on Nikol'skii-Besov spaces $B^s_1(M)$. 
\section[The $G$-differentiability set for the $W^{k,1}(M)$ norm]{The $G$-differentiability set for the $W^{k,1}(M)$ norm}
In this section we present our main result. We first recall the formulae for the Fr\'echet differentiability in the case of $L^{p}(\Omega)$ spaces, where $\Omega$ is a bounded domain in $\mathbb{R}^n$ and $1<p<\infty$. Indeed, it is well-known 
 (cf. \cite{di}) that the norm $\|f\|_{L^{p}} = (\int_{\Omega}|f(w)|^{p}dw)^{1/p}$ is Fr\'echet differentiable and moreover its derivative is given by 
\beq\partial_{F}\|f\|_{L^{p}}=\frac{|f|^{p-1}\sign(f)}{\|f\|_{L^{p}}^{p-1}},\label{formF1q}\eq
for any $f\neq 0$. As a consequence, an extension is available for our Sobolev spaces  $W^{k,p}(M)$, with $1<p<\infty$ obtaining that $\|\cdot\|_{W^{k,p}(M)}$ is Fréchet differentiable and sa\-tis\-fies
\[\partial_{F}\|f\|_{W^{k,p}(M)}=\sum_{|\alpha|\leq k}\frac{|D^{\alpha}f|^{p-1}\sign(D^{\alpha}f)}{\|f\|_{W^{k,p}(M)}^{p-1}},\] this for $D^{\alpha}f$, the multi-index notation of a partial derivative of $f$, with $|\alpha|\leq k$ and $f\neq 0$.\\

We can now focus in the pathological case $p=1$ and the Sobolev norm  $\|\cdot\|_{W^{k,1}(M)}$. We start by establishing some preliminary results within a more general context of meausure spaces. Given a measure space $(\Omega, \cal{F}, \mu)$, we denote by $\mathcal{L}^1(\Omega, \mu)$ the space of the real valued integrable functions over $\Omega$.  On $\mathcal{L}(\Omega, \mu)$ one defines the seminorm $\varphi(f)=\int_{\Omega}|f(x)|d\mu(x)$. One also defines on $\mathcal{L}(\mu)$  an equivalence relation $\mathcal{R}$  by, $f\mathcal{R}g\Longleftrightarrow f = g\, \mu-a.e.$ The equivalence class of $f$ is denoted by $\tilde{f}.$ Thus, the quotient $\mathcal{L}(\Omega)/ \mathcal{R}$ defines the classical Banach space $L^{1}(\Omega,\mu)$ endowed with the norm $\varphi(\tilde{f})=\int_{\Omega}|f(x)|d\mu(x)$.\\

The signature function will arise in a natural way in our study of the  G\^ateaux derivative.
\begin{definition} The signature $\sign(x)$ of a real number $x$, is defined by  $\sign(x)=\frac{x}{|x|}$ if $x\neq 0$, and $\sign(x)=0$ if $x=0$. 
\end{definition}

In order to  prove the main theorem we first recall a mild lemma which will help to obtain a pointwise formulae for the  G\^ateaux derivative and its proof can be found in Page 97, Lemma 3.1 of \cite{detello}.
%The proofs are given at the end of the paper.

\begin{lemma}\label{lmain1a} Let $f,\, h:\Omega\rightarrow \ar$  be functions and $x\in\Omega$ such that $h(x)\neq 0$. Then 
\beq\lim\limits_{t\rightarrow 0}\frac{|f(x)+th(x)|-|f(x)|}{t},\label{j6nr}\eq
exists if and only if $f(x)\neq 0$. Moreover, if that is the case
\beq\lim\limits_{t\rightarrow 0}\frac{|f(x)+th(x)|-|f(x)|}{t}=\sign(f(x))h(x).\label{ftja46}\eq
\end{lemma}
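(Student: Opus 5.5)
The statement is a pointwise fact about real numbers: fix the point $x$ and write $a=f(x)$ and $b=h(x)\neq 0$. The plan is to study the one-variable difference quotient $q(t)=\frac{|a+tb|-|a|}{t}$ for $t\neq 0$, split into the cases $a\neq 0$ and $a=0$, and compute the one-sided limits at $t=0$ in each case.

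\emph{Case $a\neq 0$.} The function $s\mapsto|s|$ is differentiable at $s=a$ with derivative $\sign(a)$. Concretely, for $|t|<|a|/|b|$ the numbers $a+tb$ and $a$ have the same sign. Hence $|a+tb|=\sign(a)(a+tb)$, and so
\[
q(t)=\frac{\sign(a)(a+tb)-\sign(a)a}{t}=\sign(a)\,b .
\]
Thus $q$ is constant near $0$, the limit exists, and it equals $\sign(f(x))h(x)$. This gives both the ``if'' direction and formula \eqref{ftja46}.

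\emph{Case $a=0$.} Here $q(t)=\frac{|t||b|}{t}=\sign(t)\,|b|$. The right limit is $|b|$ and the left limit is $-|b|$. These differ precisely because $b\neq 0$, so the limit \eqref{j6nr} does not exist. This is the ``only if'' direction, by contraposition.

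There is no real obstacle in this argument. The only point needing care is that the hypothesis $h(x)\neq 0$ is exactly what separates the one-sided limits when $f(x)=0$; if $h(x)=0$ the quotient would vanish identically and the limit would exist trivially. I will make this role of the hypothesis explicit, since it is what later lets one test G\^ateaux differentiability by choosing $h$ nonvanishing on the set where $D^\alpha f=0$.
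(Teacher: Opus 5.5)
Your proof is correct and complete. For $f(x)\neq 0$ the quotient equals $\sign(f(x))h(x)$ as soon as $|t|<|f(x)|/|h(x)|$, and for $f(x)=0$ it equals $\sign(t)|h(x)|$, whose one-sided limits $\pm|h(x)|$ differ. The paper does not prove this lemma itself; it cites Lemma 3.1 of \cite{detello}. Your case split is the standard elementary argument, and it is the pointwise version of the computation $\frac{|t|}{t}\mu(Z)=\sign(t)\mu(Z)$ that the paper uses in the proof of Theorem \ref{main1a}.
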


We now consider a measure space $(\Omega, \cal{F},\mu)$. As a consequence of the  lemma above we obtain  the following corollary.
\begin{corollary} \label{cordd2} Let $f, h:\Omega\rightarrow \ar$ be measurable functions such that $h(x)\neq 0$ $\mu$-a.e. Then 
\beq\lim\limits_{t\rightarrow 0}\frac{|f(x)+th(x)|-|f(x)|}{t} \mbox{ exists } \mu-a.e\label{limj57}\eq
if and only if $f(x)\neq 0$ $\mu$-a.e.\\

Moreover, if that is the case the formula \eqref{ftja46} holds.
\end{corollary}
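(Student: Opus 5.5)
The corollary follows from Lemma \ref{lmain1a} applied pointwise, so the plan is to compare the relevant exceptional sets and show they agree up to $\mu$-null sets. I will use three sets. Let $N_h=\{x\in\Omega: h(x)=0\}$, which is $\mu$-null by hypothesis. Let $Z_f=\{x\in\Omega: f(x)=0\}$. Let $E$ be the set of points $x$ at which the limit in \eqref{limj57} fails to exist. Measurability of $Z_f$ and $N_h$ is immediate since $f,h$ are measurable. The statement ``the limit exists $\mu$-a.e.'' will be read as ``$E$ is contained in a $\mu$-null set,'' so no measurability of $E$ is needed.

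The key observation comes from Lemma \ref{lmain1a} applied at each point $x\notin N_h$: the limit exists at $x$ if and only if $f(x)\neq 0$. This gives the set identity
\[
E\setminus N_h = Z_f\setminus N_h .
\]
Hence $E$ and $Z_f$ differ only inside the null set $N_h$.

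For ($\Leftarrow$), suppose $f\neq 0$ $\mu$-a.e., so $\mu(Z_f)=0$. Then $E\subseteq (Z_f\setminus N_h)\cup N_h\subseteq Z_f\cup N_h$, which is a null set, so the limit exists $\mu$-a.e. For ($\Rightarrow$), suppose $E\subseteq A$ with $\mu(A)=0$. Then $Z_f\subseteq (E\setminus N_h)\cup N_h\subseteq A\cup N_h$, so $\mu(Z_f)=0$, that is, $f\neq0$ $\mu$-a.e. In either case, at every point outside the null set $Z_f\cup N_h$ we have $f(x)\neq0$ and $h(x)\neq0$. The second part of Lemma \ref{lmain1a} then gives formula \eqref{ftja46} there, so \eqref{ftja46} holds $\mu$-a.e.

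There is essentially no obstacle. The only point needing care is the measure-theoretic bookkeeping: the hypothesis $h\neq0$ is used only to discard $N_h$, and $E$ may be treated as a subset of a null set, so completeness of $\mu$ is not needed. If one wanted $E$ itself to be measurable, the identity above gives it directly: $E=(Z_f\setminus N_h)\cup(E\cap N_h)$, where the second piece is a subset of a null set.
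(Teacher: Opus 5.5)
Your proof is correct and takes the same route as the paper: apply Lemma \ref{lmain1a} pointwise off the null set where $h$ vanishes, then do the null-set bookkeeping. You in fact handle that bookkeeping more carefully than the paper, whose forward direction asserts $f(x)\neq 0$ at every point where $h(x)\neq 0$, although this only follows at those points where the limit also exists; your inclusion $Z_f\subseteq A\cup N_h$ accounts for exactly that. One small sharpening of your closing remark: at any $x$ with $h(x)=0$ the difference quotient is identically $0$, so $E\cap N_h=\emptyset$ and hence $E=Z_f\setminus N_h$ is measurable outright, with no appeal to completeness of $\mu$.
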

\begin{proof} We assume $h:\Omega\rightarrow \ar$ to be a measurable function such that $h(x)\neq 0$ $\mu$-a.e. and we pick $A\in \mathcal{F}$  such that $h(x)\neq 0$ for all $x\in A$, with $\mu(A^c)=0$. We first  assume that  the limit \eqref{limj57} exists $\mu$-a.e. Hence, $f(x)\neq 0$ for every  $x\in A$ by Lemma \ref{lmain1a},  thus  $f(x)\neq 0$  $\mu$-a.e.  \\

On the other hand, if $f(x)\neq 0$  $\mu$-a.e. Then there exists a  set $D\in \cal{F}$ such that $\mu(D^c)=0$ and  $f(x)\neq 0$ for all $x\in D$. By taking $x\in A\cap D$ we have that $f(x), h(x)\neq 0$. Then by Lemma \ref{lmain1a} the limit \eqref{limj57} exists for every $x\in A\cap D$. Therefore the limit \eqref{limj57} 
 exists $\mu$-a.e.\\
 
 The last part of the statement clearly follows from the last part of Lemma \ref{lmain1a} by applying the formula \eqref{ftja46} in a set $\Gamma$ where we have $f(x), h(x)\neq 0$ and $\mu(\Gamma^c)=0.$
\end{proof}
We are now ready to present our main result. Let us fix a compact Riemannian manifold $(M,g).$ 
The following theorem 
gives a precise description of the $G$-differentiability set for the usual  $W^{k,1}(M)$ norm and the corresponding formulae for the  G\^ateaux derivative. We shall use the notation $
 D^{\alpha}:=X_{j_1}^{\alpha_1}\cdots X_{j_\ell}^{\alpha_\ell},$ in \eqref{alpha} for the arbitrary compositions of smooth vector fields. We will denote by $\mu$ the measure corresponding to the Riemannian metric $g$, i.e., $\mu=\nu(g)$.\\

Let $B$ be the subset of $W^{k,1}(M)$ consisting of equivalence classes $\tilde{f}$ such that for some representative $f$ of the  class $\tilde{f}$, one has 
 that $D^{\alpha}f(x)\neq 0\,$ $\mu$ a.e. for every $|\alpha|\leq k$. It is clear  that if $\tilde{f}\in B$, any representative of the class $\tilde{f}$ satisfies the condition defining $B$. That is, if $f_1$ is another representative of the class $\tilde{f}$, then  $D^{\alpha}f_1(x)\neq 0\,$ $\mu$ a.e. for every $|\alpha|\leq k$, due to the definition of the equivalence relationship through the corresponding Sobolev norm of $W^{k,1}(M)$.
\begin{theorem}\label{main1a} Let $M$ be a closed manifold. Let $B$ be the subset of $W^{k,1}(M)$ above defined. Then, the set of $G$-differentiability of the norm $\varphi:=\Vert\cdot\Vert_{W^{k,1}}$ is $B$. Moreover, the G\^ateaux derivative of  $\varphi$ at  $\tilde{f}\in B$ and the direction $h\in W^{k,1}(M)$ is given by 
	\beq\partial_{G}\varphi(\tilde{f})(h)=\sum_{\|\alpha\|_{\mathbb{N}^{n}}\leq k}\left[ \int_{M  } \sign(D^{\alpha}f(x))D^{\alpha}h(x)d\mu(x)\right] ,\label{Gder67}\eq
where $f$ is any representative of the class  $\tilde{f}$ and $D^{\alpha}f$, $D^{\alpha}h$ respectively are the derivatives of $f$ and $h$. 
	\end{theorem}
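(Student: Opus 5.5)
The proof splits into the two inclusions: every $\tilde f\in B$ is a point of $G$-differentiability, and no point outside $B$ is. Both rest on computing the one-sided derivatives of $\varphi$ at $\tilde f$ in a direction $h$, term by term in the finite sum over $|\alpha|\le k$. For fixed $\alpha$ and $t\neq 0$, the quotient
$$q_t^\alpha(x)=\frac{|D^\alpha f(x)+tD^\alpha h(x)|-|D^\alpha f(x)|}{t}$$
satisfies $|q_t^\alpha|\le |D^\alpha h|$ by the triangle inequality, and $|D^\alpha h|\in L^1(M,\mu)$ because $h\in W^{k,1}(M)$. So the dominated convergence theorem lets us pass to the limit under the integral whenever the pointwise limit exists $\mu$-a.e.

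\emph{Sufficiency.} Let $\tilde f\in B$ and let $f$ be any representative, so that $D^\alpha f\neq0$ $\mu$-a.e. for every $|\alpha|\le k$. By Lemma \ref{lmain1a}, at every point where $D^\alpha f(x)\neq 0$ the limit of $q^\alpha_t(x)$ exists and equals $\sign(D^\alpha f(x))D^\alpha h(x)$; note this holds whether or not $D^\alpha h(x)=0$. Hence this is the $\mu$-a.e. limit, and dominated convergence gives, for each $\alpha$,
$$\lim_{t\to0}\frac{1}{t}\Big(\int_M|D^\alpha(f+th)|\,d\mu-\int_M|D^\alpha f|\,d\mu\Big)=\int_M \sign(D^\alpha f)\,D^\alpha h\,d\mu .$$
Summing over the finitely many $\alpha$ gives \eqref{Gder67}. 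The resulting map $h\mapsto\partial_G\varphi(\tilde f)(h)$ is linear, and since $|\sign|\le 1$ it is bounded by $\|h\|_{W^{k,1}(M)}$. This also shows the formula does not depend on the representative, by the remark preceding the theorem.

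\emph{Necessity.} Let $\tilde f\notin B$, so there exist $\alpha_0$ and a set $E$ with $\mu(E)>0$ such that $D^{\alpha_0}f=0$ on $E$. For a general $h$, write $Z_\alpha=\{D^\alpha f=0\}$. On $Z_\alpha$ we have $q_t^\alpha=|D^\alpha h|\,|t|/t$, and off $Z_\alpha$ Lemma \ref{lmain1a} applies. Dominated convergence then gives the one-sided derivatives
$$\varphi'_{\pm}(\tilde f;h)=\sum_{|\alpha|\le k}\int_{M\setminus Z_\alpha}\sign(D^\alpha f)\,D^\alpha h\,d\mu\;\pm\;\sum_{|\alpha|\le k}\int_{Z_\alpha}|D^\alpha h|\,d\mu .$$
These differ as soon as $\int_{E}|D^{\alpha_0}h|\,d\mu>0$. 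It therefore suffices to produce one $h\in C^\infty(M)$ with $D^{\alpha_0}h\neq 0$ on a subset of $E$ of positive measure; then the two-sided limit fails for this $h$, and $\varphi$ is not $G$-differentiable at $\tilde f$.

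\emph{Constructing $h$ (main obstacle).} If $\alpha_0=0$, take $h\equiv1$. In general:
\begin{itemize}
\item Pick a Lebesgue density point $x_0$ of $E$ inside a coordinate chart.
\item In those coordinates, take $h=\chi P$, where $\chi$ is a cutoff equal to $1$ near $x_0$ and $P$ is a polynomial chosen so that $D^{\alpha_0}h(x_0)\neq0$.
\item By continuity, $D^{\alpha_0}h\neq0$ on a neighbourhood $U$ of $x_0$; since $x_0$ is a density point, $\mu(U\cap E)>0$.
\end{itemize}
The delicate point is that $D^{\alpha_0}$ must be a nonzero operator at $x_0$, i.e. its principal part must not vanish there. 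This holds when the vector fields $X_j$ in \eqref{alpha} are taken to span $T_xM$ at every point, as in the equivalent-norm description, for instance local coordinate frames glued by a partition of unity. I would make this nondegeneracy assumption on the family explicit. Without it, $D^{\alpha_0}$ could vanish identically near $x_0$, so that $D^{\alpha_0}f=0$ there for every $f$, and $B$ would have to be interpreted only over the nondegenerate $\alpha$'s.
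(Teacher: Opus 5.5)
Your overall route is sound, and it differs from the paper's proof in ways that repair it. For sufficiency the paper also combines dominated convergence with Lemma \ref{lmain1a}, but it asserts $D^{\alpha}h\neq0$ $\mu$-a.e., which need not hold for a general direction $h$. Your remark that the pointwise limit equals $\sign(D^{\alpha}f)D^{\alpha}h$ also where $D^{\alpha}h=0$ removes that defect. For necessity the paper takes $Z=\{f=0\}$ and $h=1_Z$. This only treats a failure at $\alpha=0$. It also uses $1_Z\in W^{k,1}(M)$ with $D^{\alpha}1_Z=0$ for $\alpha\neq0$, which is false unless $Z$ is, up to a null set, a union of connected components. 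Your exact formula for $\varphi'_{\pm}(\tilde f;h)$, combined with a smooth test function localized at a density point, handles every $\alpha_0$, and $h\equiv1$ is the correct choice when $\alpha_0=0$.

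The genuine gap is the nondegeneracy hypothesis you propose. Spanning $T_xM$ at every point does not make $D^{\alpha_0}$ a nonzero operator at almost every point, and your own example refutes it. The family $X_{m,i}=\eta_m\partial^{(m)}_i$ spans $T_xM$ everywhere, yet the operator $D^{\alpha}=X_{1,1}$ annihilates every function on the nonempty open set $M\setminus\operatorname{supp}\eta_1$. So $D^{\alpha}f=0$ there for every $f$, which gives $B=\emptyset$, and for $E$ inside that set no test function $h$ with $D^{\alpha}h\neq0$ on $E$ exists. Yet $\varphi$ is continuous and convex on the separable space $W^{k,1}(M)$, so by Theorem \ref{a4} it is G\^ateaux differentiable on a dense $G_\delta$ set. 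Hence, under your hypothesis, both your key step and the statement itself fail.

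The right condition is that each field $X_j$ of the family vanishes only on a $\mu$-null set; for $k\geq1$ this is also necessary, by the same Mazur argument. Then at a.e.\ $x$ the principal symbol of $D^{\alpha_0}$ is a product of nonzero linear forms $\xi\mapsto\xi(X_{j_i}(x))$, so $D^{\alpha_0}$ is nonzero there. You must pick $x_0$ to be a density point of $E$ at which this holds; this is possible because both properties hold at a.e.\ point of $E$. Write $D^{\alpha_0}=\sum_\beta a_\beta\partial^\beta$ near $x_0$ with $a_\beta(x_0)\neq0$. The choice $P=(x-x_0)^\beta/\beta!$ gives $D^{\alpha_0}(\chi P)(x_0)=a_\beta(x_0)\neq0$, and the rest of your argument goes through. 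Such families exist, e.g.\ tangential projections of generic constant fields under an embedding $M\subset\mathbb{R}^N$, whose zeros are the finitely many critical points of Morse height functions.
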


\begin{proof} %We will present the full proof in the case $n=1$ for simplicity. A careful look at its details will convince the reader ****\\

We assume $\tilde{f}\in B$ and that $f$ is a representative of the class $\tilde{f}$. We start by observing that for any $h\in  W^{k,1}(M)$ the right hand side of the  formula \eqref{Gder67} is independent of the representative in the class $\tilde{f}$. \\
	
We are now going to prove the $G$-differentiability of $\varphi$ at $\tilde{f}$. Let $h\in  W^{k,1}(M)\setminus\{0\}$, an application of the  Lebesgue dominated convergence Theorem gives us
{\setlength\arraycolsep{0pt}
\begin{align*}
	\lim\limits_{t\rightarrow 0}\frac{\varphi(f+th)-\varphi(f)}{t}=& \lim\limits_{t\rightarrow 0} \frac{\sum\limits_{\|\alpha\|_{\mathbb{N}^{n}}\leq k}  \|D^{\alpha}f(x)+tD^{\alpha}h(x)\|_{L^{1}(M)}-\sum\limits_{\|\alpha\|_{\mathbb{N}^{n}}\leq k}  \|D^{\alpha}f(x)\|_{L^{1}(M)}}{t}\\
=&\sum\limits_{\|\alpha\|_{\mathbb{N}^{n}}\leq k}\lim\limits_{t\rightarrow 0}\frac{  \|D^{\alpha}f(x)+tD^{\alpha}h(x)\|_{L^{1}(M)}-  \|D^{\alpha}f(x)\|_{L^{1}(M)}}{t}\\
=&\ \sum\limits_{\|\alpha\|_{\mathbb{N}^{n}}\leq k}\left[  \int\limits_{M}\lim\limits_{t\rightarrow 0}\frac{  |D^{\alpha}f(x)+tD^{\alpha}h(x)|-|D^{\alpha}f(x)|}{t}d\mu(x)\right].
	 	\end{align*}} 
Since $D^{\alpha}h(x)\neq 0$ $\mu$-a.e. and  $D^{\alpha}f(x)\neq 0$ $\mu$-a.e., by applying Corollary \ref{cordd2} we have

	\begin{align*}
\lim\limits_{t\rightarrow 0}\frac{  |D^{\alpha}f(x)+tD^{\alpha}h(x)|-|D^{\alpha}f(x)|}{t}=\sign(D^{\alpha}f(x))D^{\alpha}h(x)
	\end{align*}
$\mu$-a.e., and

 \[\lim\limits_{t\rightarrow 0}\frac{\varphi(f+th)-\varphi(f)}{t}=\sum\limits_{\|\alpha\|_{\mathbb{N}^{n}}\leq k}\left[ \int_{M  } \sign(D^{\alpha}f(x))D^{\alpha}h(x) d\mu(x)\right].\]
	
We now assume that $\tilde{f}\in B^{c}$ and we will show that $\varphi$ does not have $G$-derivative at  $\tilde{f}$. For the set 
\begin{equation}\label{Zeq} Z=\{x\in M:f(x)=0\}\subset B^{c}, 
 \end{equation}
we have
$\mu(Z)>0.$ Since the manifold is compact, we also have $\mu(Z)<\infty$.  We consider $h=1_{Z}$, thus $h\in W^{k,1}(M)\setminus 0$. Since $h=0$ on $Z^c$ and for $\|\alpha\|\neq 0$, $D^{\alpha}h=0$ on $M$, for $t\in\ar$ we have  $|f+th|-|f|=|f|-|f|=0$ on  $Z^c$ and $|D^{\alpha}f+tD^{\alpha}h|-|D^{\alpha}f|=|D^{\alpha}f|-|D^{\alpha}f|=0$ on $M$. Hence, for the calculation of the $G$-derivative of $\varphi$ at $f$ in the direction $h$, is enough to consider the integration on $Z$.
 For $t\neq 0$ we have 
{\setlength\arraycolsep{0pt}
		\begin{align*}
		\frac{\varphi(f+th)-\varphi(f)}{t}=& \sum\limits_{\|\alpha\|_{\mathbb{N}^{n}}\leq k}\frac{  \|D^{\alpha}f(x)+tD^{\alpha}h(x)\|_{L^{1}(M)}-  \|D^{\alpha}f(x)\|_{L^{1}(M)}}{t} \\ 
		 =&\frac{  \|f(x)+th(x)\|_{L^{1}(M)}-  \|f(x)\|_{L^{1}(M)}}{t}\nonumber\\&\quad +\sum\limits_{0<\|\alpha\|_{\mathbb{N}^{n}}\leq k}\frac{  \|D^{\alpha}f(x)+tD^{\alpha}h(x)\|_{L^{1}(M)}-  \|D^{\alpha}f(x)\|_{L^{1}(M)}}{t}\nonumber\\ 
		 =&\frac{ \int_{Z}|t|dx}{t}=\frac{|t|}{t}\mu(Z)=\sign(t)\mu(Z).\nonumber\\
		\end{align*}}
Hence,  we have $(\partial_G^+\varphi)f(h)=\mu(Z)$ and $(\partial_G^+\varphi)f(h)=-\mu(Z)$. Therefore  the $G$-derivative of $\varphi$ at $f$ does not exists in $B^c$, which shows that the set of $G$-differentiability of the norm $\varphi$ is  $B$.\\
 
It is clear that the formula \eqref{Gder67} defines a bounded linear operator from  $ W^{1,1}(M)$ into $\ar$ for every $\tilde{f}$ in $B$, and this concludes the proof of the theorem.
\end{proof}

\begin{remark}  The   Theorem \ref{main1a} also holds for an open subset $\Omega$ of $\arn$ instead of a closed Riemannian manifold $M$. If $\mu(\Omega)<\infty$ the same proof works. If $\mu(\Omega)=\infty$ one needs a slight adaptation in the proof. Indeed, in \eqref{Zeq} one can instead call 
\[Z_1=\{x\in M:f(x)=0\}\subset B^{c}.\] 
 Then  we have
$\mu(Z_1)>0.$ If $\mu(Z_1)=\infty$, $\mu$ being the Lebesgue measure on $\arn$, we can choose a subset $Z$ of $Z_1$ such that $0<\mu(Z)<\infty,$ and the reminder part of the proof follows in the same way with $M=\Omega.$
\end{remark}	
We now deduce a consequence for the mean value theorem for the G\^ateaux derivative, where the knowledge on the set of differentiability as described in Theorem \ref{main1a} will be crucial. 
 
\begin{corollary} If $f, g\in W^{k,1}(M)$,  $D^{\alpha}f(x)\neq 0$ a.e. for every $|\alpha|\leq k$ and assumming that  $\sign(D^{\alpha}f(x))=\sign(D^{\alpha}g(x))$ a.e. for every $|\alpha|\leq k$. Then, $[f,f+g]\subset B$ and  
\[0\leq \Vert f+g\Vert_{W^{k,1}}-\Vert f \Vert_{W^{k,1}}\leq\]\[\leq \sup\limits_{0\leq t\leq 1}\left(\sum_{\|\alpha\|_{\mathbb{N}^{n}}\leq k}\left[ \int_{M  } \sign(D^{\alpha}(f+tg)(x))D^{\alpha}g(x)d\mu(x)\right]\right)\Vert g \Vert_{W^{k,1}}.\]
\end{corollary}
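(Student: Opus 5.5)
The plan is to reduce everything to a one-variable problem along the segment and apply the classical mean value theorem, using Theorem \ref{main1a} to guarantee differentiability at every point of the segment.

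\textbf{Step 1: the segment lies in $B$.} Fix $t\in[0,1]$ and a multi-index $|\alpha|\le k$. By linearity, $D^{\alpha}(f+tg)=D^{\alpha}f+tD^{\alpha}g$. Choose a set $\Gamma$ with $\mu(\Gamma^c)=0$ on which, for all of the finitely many $\alpha$ at once, $D^{\alpha}f(x)\neq0$ and $\sign(D^{\alpha}f(x))=\sign(D^{\alpha}g(x))$. For $x\in\Gamma$ the two summands $D^{\alpha}f(x)$ and $tD^{\alpha}g(x)$ have the same sign (or the second vanishes when $t=0$). Hence
\[|D^{\alpha}(f+tg)(x)|=|D^{\alpha}f(x)|+t|D^{\alpha}g(x)|\ge|D^{\alpha}f(x)|>0 .\]
So $f+tg\in B$ for every $t\in[0,1]$, that is, $[f,f+g]\subset B$. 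As a by-product, $\sign(D^{\alpha}(f+tg))=\sign(D^{\alpha}f)$ a.e.

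\textbf{Step 2: reduction to one variable.} Set $\phi(t)=\Vert f+tg\Vert_{W^{k,1}}$ on $[0,1]$. For $t_0\in[0,1]$, the derivative $\phi'(t_0)$ is precisely the Gâteaux derivative of $\varphi$ at $f+t_0g$ in the direction $g$. For $t_0$ at an endpoint this is a one-sided derivative, which the two-sided Gâteaux limit also provides. By Step 1 and Theorem \ref{main1a}, this derivative exists and equals
\[\psi(t_0):=\sum_{|\alpha|\le k}\int_M \sign(D^{\alpha}(f+t_0g))D^{\alpha}g\,d\mu .\]
Since $\phi$ is differentiable on $[0,1]$, the mean value theorem yields some $\xi\in(0,1)$ with $\phi(1)-\phi(0)=\psi(\xi)\le\sup_{0\le t\le1}\psi(t)$.

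\textbf{Step 3: nonnegativity.} By the sign identity from Step 1, $\psi(t)=\sum_{\alpha}\int_M|D^{\alpha}g|\,d\mu=\Vert g\Vert_{W^{k,1}}\ge0$. This gives the lower bound $0\le\phi(1)-\phi(0)$. Alternatively, integrating the pointwise identity of Step 1 directly shows that $\phi(t)=\Vert f\Vert_{W^{k,1}}+t\Vert g\Vert_{W^{k,1}}$ is affine.

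\textbf{Main obstacle: the factor $\Vert g\Vert_{W^{k,1}}$.} The delicate point is the extra factor $\Vert g\Vert_{W^{k,1}}$ multiplying the supremum in the stated upper bound. Steps 2 and 3 show the difference equals $\Vert g\Vert_{W^{k,1}}$ and the supremum also equals $\Vert g\Vert_{W^{k,1}}$. So the inequality as written amounts to $\Vert g\Vert\le\Vert g\Vert^2$, which holds only when $\Vert g\Vert_{W^{k,1}}\ge1$ (or $g=0$), and fails for small nonzero $g$.

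To obtain a correct mean value inequality carrying a factor $\Vert g\Vert_{W^{k,1}}$, I would replace the supremum of $\psi$ by the operator norm. That is, use
\[\phi(1)-\phi(0)\le\sup_{t}\Vert\partial_G\varphi(f+tg)\Vert_{(W^{k,1})'}\,\Vert g\Vert_{W^{k,1}}.\]
This follows from $|\psi(t)|\le\Vert\partial_G\varphi(f+tg)\Vert\,\Vert g\Vert$, together with the elementary bound $\Vert\partial_G\varphi(\tilde f)\Vert\le1$, which holds since $|\sign|\le1$. I expect this normalization issue, rather than the differentiability argument itself, to be the step requiring care.
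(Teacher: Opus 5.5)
Your argument is correct, and for every part of the statement that is actually true it is the paper's argument. The paper also gets $[f,f+g]\subset B$ and the lower bound from the sign agreement. It then applies the one-variable mean value theorem along the segment, using Theorem \ref{main1a}, to reach $\Vert f+g\Vert_{W^{k,1}}-\Vert f\Vert_{W^{k,1}}\le\sup_{0\le t\le1}|\partial_G\varphi(f+tg)(g)|$, which is your $\sup_t\psi(t)$.

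Your objection to the factor $\Vert g\Vert_{W^{k,1}}$ is also correct, and it points to an error in the paper, not a gap in your proof. The paper introduces that factor in its last step, $\sup_t|\partial_G\varphi(f+tg)(g)|\le\sup_t\bigl(\sum_{\alpha}\int_M\sign(D^{\alpha}(f+tg))D^{\alpha}g\,d\mu\bigr)\Vert g\Vert_{W^{k,1}}$. That step confuses the value of the derivative at the direction $g$ with its operator norm. The Banach-space mean value inequality has $\sup_t\Vert\partial_G\varphi(f+tg)\Vert\,\Vert g\Vert_{W^{k,1}}$ in that place, exactly as in your repair.

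Concretely, take any $f$ satisfying the hypotheses and $g=\epsilon f$ with $0<\epsilon<1/\Vert f\Vert_{W^{k,1}}$. The left side equals $\epsilon\Vert f\Vert_{W^{k,1}}$, while the stated right side equals $\epsilon^{2}\Vert f\Vert_{W^{k,1}}^{2}$, so the inequality as written fails. There are two correct forms:
\begin{itemize}
\item the bound without the factor $\Vert g\Vert_{W^{k,1}}$, which your Step 2 proves, in fact with equality, since $\phi$ is affine of slope $\Vert g\Vert_{W^{k,1}}$;
\item your operator-norm version, which here is just the triangle inequality.
\end{itemize}
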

\begin{proof} First, we note that if $0\leq s \leq 1$, we have $(1-s)f+s(f+g)=f+sg$, and $D^{\alpha}f(x)+sD^{\alpha}g(x)\neq 0 \,$ a.e for every  $|\alpha|\leq k$, since 
 $\sign(D^{\alpha}f(x))=\sign(D^{\alpha}g(x))$ a.e. for every $|\alpha|\leq k$.\\

Moreover, $|D^{\alpha}(f+g)(x)|\geq |D^{\alpha}f(x)|$ a.e for every  $|\alpha|\leq k$ again, since $\sign(D^{\alpha}f(x))=\sign(D^{\alpha}g(x))$ a.e. for every $|\alpha|\leq k$. Hence, 
\begin{equation}\label{pos1a}
0\leq \Vert f+g\Vert_{W^{k,1}}-\Vert f \Vert_{W^{k,1}}.
\end{equation}

Therefore, by the mean value theorem for the G\^ateaux derivative, Theorem \ref{main1a} and \eqref{pos1a}, we get
\[0\leq \Vert f+g\Vert_{W^{k,1}}-\Vert f \Vert_{W^{k,1}}=|\Vert f+g\Vert_{W^{k,1}}-\Vert f \Vert_{W^{k,1}}|\leq\]
\[\leq\sup\limits_{0\leq t\leq 1}|\partial_{G}\Vert f+tg \Vert_{W^{k,1}}(g)|\]
\[\leq\sup\limits_{0\leq t\leq 1}\left(\sum_{\|\alpha\|_{\mathbb{N}^{n}}\leq k}\left[ \int_{M  } \sign(D^{\alpha}(f+tg)(x))D^{\alpha}g(x)d\mu(x)\right]\right)\Vert g \Vert_{W^{k,1}}.\]

\end{proof}

\noindent{\bf{Acknowledgements}}\\
\noindent The first author was supported  by the FWO  O\textnormal{d}ysseus  1  grant  G.0H94.18N:  Analysis  and  Partial Differential Equations, by the Methusalem programme of the Ghent University Special Research Fund (BOF)
(Grant number 01M01021). The second  author was supported by Vic. Inv. Universidad del Valle CI 71352.\\

%pointed out a number of corrections and valuable suggestions helping to improve the %main results and the presentation of the manuscript.

\noindent $\bullet$ Our manuscript has not associated data.\\

\noindent $\bullet$ No conflict of interest/Competing interests

%\section*{References}

%\bibliography{mybibfile}

\end{document}